\tikzstyle{every picture} = [>=latex]
\newtheorem{theorem}{Theorem}
\newtheorem{lemma}[theorem]{Lemma}
\def\ca#1{{\mathcal{#1}}}
\title{A Short Proof of Euler--Poincar\'e Formula}
\begin{document}
\author{\bf Petr Hlin\v en\'y
	\\[1ex]Faculty of Informatics, Masaryk University, Brno, Czech
		Republic}

\maketitle

\begin{abstract}
``$V-E+F=2$'', the famous Euler's polyhedral formula,
has a natural generalization to convex polytopes in every finite dimension,
also known as the Euler--Poincar\'e Formula.
We provide another short inductive proof of the general formula. 
Our proof is self-contained and it does not use shellability of polytopes.

{\bf Keywords:} {Euler--Poincar\'e formula, polytopes, discharging}
%
\end{abstract}

\section{Introduction}

In this paper we follow the standard terminology of polytopes theory,
such as Ziegler~\cite{Ziegler}.
We consider {\em convex polytopes}, defined as a convex hull of finitely
many points, in the $d$-dimensional Euclidean space for an arbitrary
$d\in\mathbb N$, $d\geq1$.
We shortly say a polytope to mean a convex polytope.
A landmark discovery in the history of combinatorial investigation 
of polytopes was famous Euler's formula, 
stating that for any $3$-dimensional polytope with $v$ vertices,
$e$ edges and $f$ faces, $v-e+f=2$ holds.
This finding was later generalized, in every dimension~$d$,
to what is nowadays known as (generalized) Euler's relation or
Euler--Poincar\'e formula, as follows.

For instance, in dimension $d=1$ we have $v=2$, which can be rewritten as
$v-1=1$, and in dimension $d=2$ we have got $v-e=0$ or $v-e+1=1$.
Similarly, the $d=3$ case can be rewritten as $v-e+f-1=1$.
Note that the `$1$' left of `$=$' stands in these expressions for the polytope itself.
In general, the following holds:

\begin{theorem}[``Euler--Poincar\'e formula''; 
	Schl\"afli \cite{Schlafli} 1852]\label{thm:EulerPoin}
Let $P$ be a convex polytope in~$\mathbb R^d$, 
and denote by $f^c$, $c\in\{0,1,\dots,d\}$,
the numbers of faces of $P$ of dimension~$c$. Then
\begin{equation}\label{eq:EulerPoin}
	 f^0-f^1+f^2-\dots + (-1)^d f^d = 1 
.\end{equation}
\end{theorem}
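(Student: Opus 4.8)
The plan is a single induction on the dimension~$d$, the base case $d=1$ being the identity $f^0-f^1=2-1=1$ for a segment. In the inductive step I may assume $\dim P=d$, since otherwise all faces of dimension larger than $\dim P$ are empty and the claim reduces to the one in dimension~$\dim P$. Write $\chi(P)=\sum_{j\ge 0}(-1)^jf^j$ for the left-hand side of \eqref{eq:EulerPoin}, the sum taken over the nonempty faces of $P$ (including $P$ itself), and more generally $\chi(\mathcal F)=\sum_{G\in\mathcal F}(-1)^{\dim G}$ for a finite family $\mathcal F$ of faces, with the convention $\chi(\emptyset)=0$.

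Fix a linear functional $\phi$ that is injective on the vertices of $P$. Every nonempty face $G$ then has a unique vertex on which $\phi$ attains its maximum over $G$; call it the {\em top} of~$G$. The families $\mathcal S_v=\{\,G : v\text{ is the top of }G\,\}$, over all vertices $v$ of $P$, partition the nonempty faces of $P$, so $\chi(P)=\sum_v\chi(\mathcal S_v)$. I claim that $\chi(\mathcal S_v)=1$ if $v$ minimizes $\phi$ on $P$, and $\chi(\mathcal S_v)=0$ otherwise; since exactly one vertex minimizes $\phi$, this yields $\chi(P)=1$.

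To compute $\chi(\mathcal S_v)$ I set aside the face $\{v\}$ and pass to the vertex figure $P_v=P\cap H$, where $H$ is a hyperplane strictly between $v$ and the remaining vertices of $P$: then $P_v$ is a $(d-1)$-dimensional polytope and $G\mapsto G\cap H$ is a dimension-decreasing bijection from the faces of $P$ that strictly contain $\{v\}$ onto the nonempty faces of $P_v$. Because the vertices of $G\cap H$ correspond to the edges of $G$ at $v$, one checks that the top of $G$ equals $v$ exactly when $G\cap H\subseteq\{\phi<\phi(v)\}$ --- the less obvious direction invoking the standard ``no local optima'' fact for a linear functional on a polytope (a vertex all of whose neighbours have smaller $\phi$-value is where $\phi$ is maximal). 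Consequently $\chi(\mathcal S_v)=1-\sum(-1)^{\dim F}$, the sum running over the faces $F$ of $P_v$ with $\max_F\phi<\phi(v)$.

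It remains to evaluate that last sum: I claim it equals $1$ when $v$ is not the $\phi$-minimum of $P$ and $0$ when it is. The latter case is immediate, for then every neighbour of $v$, hence every vertex of $P_v$, lies above level $\phi(v)$, and the sum is empty. For the former case I would prove and apply (with $R=P_v$ and $c=\phi(v)$) the following cutting lemma: \emph{if $R$ is a polytope and $c$ is not a $\phi$-value of any vertex of $R$, then $\sum_{F:\,\max_F\phi<c}(-1)^{\dim F}=\chi\bigl(R\cap\{\phi\le c\}\bigr)$.} Its proof uses the standard description of the faces of the truncated polytope $R^-=R\cap\{\phi\le c\}$: besides the faces of $R$ already lying in $\{\phi<c\}$, each face $F$ of $R$ meeting both open halfspaces contributes precisely the two faces $F\cap\{\phi\le c\}$ and $F\cap\{\phi=c\}$, of dimensions $\dim F$ and $\dim F-1$, whose contributions to the alternating sum cancel. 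As $R^-=P_v\cap\{\phi\le\phi(v)\}$ is a nonempty polytope of dimension at most $d-1$ in this case, the induction hypothesis gives $\chi(R^-)=1$, which closes the induction. The one place that needs real care --- and, I expect, the main obstacle to keeping the write-up genuinely short --- is this face description of $R^-$ together with the requisite genericity of $\phi$; fortunately the latter survives the passage to vertex figures automatically, since no vertex of $P_v$ can have $\phi$-value exactly $\phi(v)$.
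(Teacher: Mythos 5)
Your proof is correct, but it takes a genuinely different route from the paper's. The paper decomposes by \emph{facets}: it double-counts signed ``flags'' attached to the faces of the Schlegel diagram (equivalently, folded onto the facets of $P$), so that each facet and its shadow under a generic projection contribute a fixed amount; the induction hypothesis is invoked in both dimensions $k-1$ and $k$, and the geometric content is concentrated in the projection statement~\eqref{eq:project}. You instead decompose by \emph{vertices}: you partition all nonempty faces of $P$ by their $\phi$-maximal vertex, identify the faces strictly above $v$ with the faces of the vertex figure $P_v$, and reduce the class of $v$ to the Euler characteristic of the truncation $P_v\cap\{\phi\le\phi(v)\}$, so the induction hypothesis is needed only in dimension $d-1$ and no Schlegel diagram or projection is required. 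Your bookkeeping checks out: the identity $\chi(\mathcal S_v)=1-\sum(-1)^{\dim F}$ over the faces of $P_v$ below level $\phi(v)$ is right, the appeal to the no-local-optima property of linear functionals is exactly what is needed for the nontrivial direction of the ``top'' criterion, and the observation that no vertex of $P_v$ can have $\phi$-value exactly $\phi(v)$ (its vertices lie on open edges at $v$) is the genericity that makes the cutting lemma applicable. The one load-bearing ingredient you state without full proof is the face description of $R\cap\{\phi\le c\}$ for a cut missing all vertices --- that every face is either a face of $R$ contained in $\{\phi<c\}$, or one of the pair $F\cap\{\phi\le c\}$, $F\cap\{\phi=c\}$ coming from a face $F$ meeting both open halfspaces, the two members of each pair cancelling in the alternating sum. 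That is a standard lemma with no hidden shellability, but proving it carefully (including that the listed faces are pairwise distinct and exhaust the faces of the truncation) takes about the same amount of work as the paper's justification of~\eqref{eq:project}; with it supplied, your induction closes and yields an equally elementary, shelling-free proof.
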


We refer to classical textbooks of Gr\"unbaum~\cite{Grunbaum}
and Ziegler~\cite{Ziegler} for a closer discussion of the interesting
history of this formula and of the difficulties associated with its proof.
Here we just briefly remark that all the historical attempts to prove the
formula in a combinatorial way, starting from Schl\"afli, 
implicitly assumed validity of a
special property called {\em shellability} of a polytope,
However, the shellability of any polytope was formally established
only in 1971 by Bruggesser and Mani~\cite{Bruggesser-Mani}.

We provide a new self-contained inductive proof of \eqref{eq:EulerPoin} 
which does not assume shellability.
Our proof has been in parts inspired by a proof of $3$-dimensional
Euler's formula via angles 
\cite[``Proof~8: Sum of Angles'']{junkyardEuler},
and by Welzl's probabilistic proof \cite{DBLP:conf/birthday/Welzl94}
of Gram's equation -- although,
the resulting exposition of the proof does not resemble either of those.
In fact, we give two different expositions of the new proof,
one in Section~\ref{sec:proofI} and the other in Section~\ref{sec:proofII}.

\section{New Self-contained Proof}
\label{sec:proofI}

Our new proof of Theorem~\ref{thm:EulerPoin} proceeds by induction on
the dimension $d\geq1$.
Note that validity of \eqref{eq:EulerPoin} is trivial for $d=1,2$,
and hence it is enough to show the following:

\begin{figure}
\hbox to \hsize{%
\hfill\includegraphics[width=0.35\hsize]{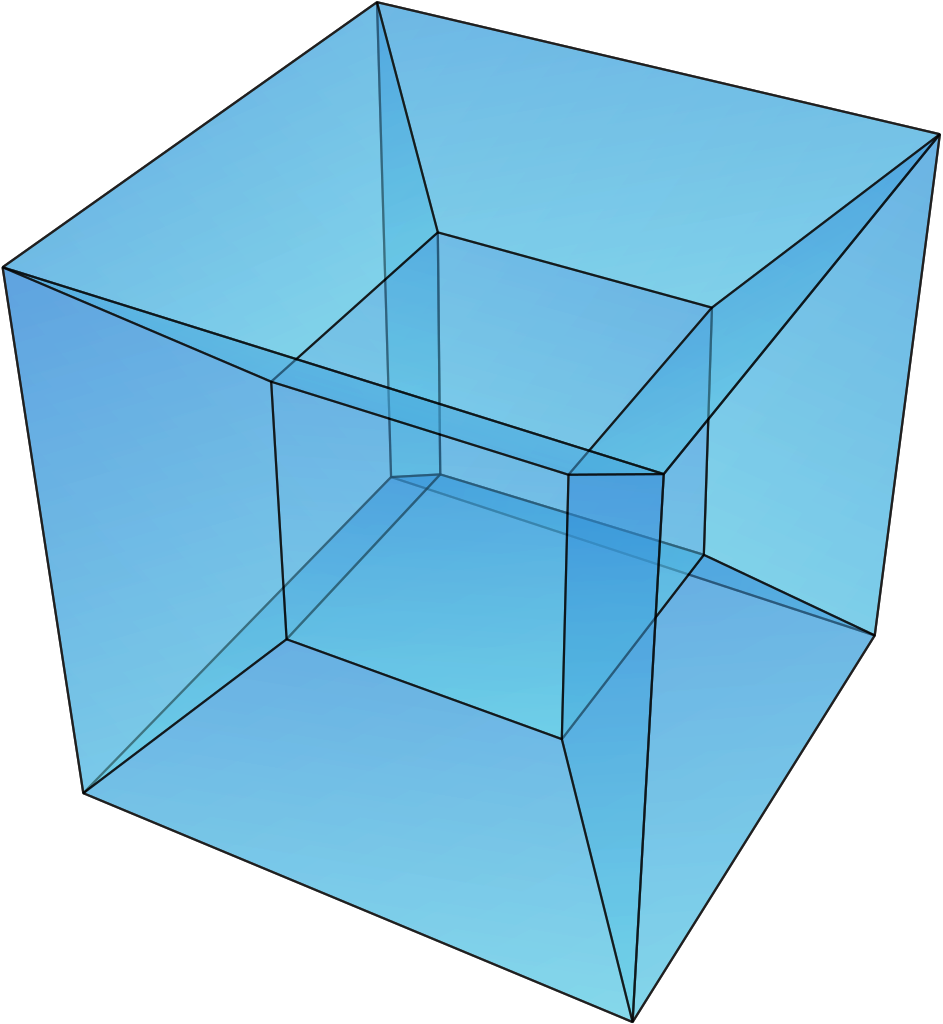}
\hfill}%
\caption{An illustration: the Schlegel diagram of the $4$-cube,
	consisting of the outer $3$-cube subdivided into seven combinatorial
	$3$-cubes. (Credit: Goffrie\,/\,Wikimedia Commons)}
\label{fig:wireframe}
\end{figure}

\begin{lemma}\label{lem:EulerPoinI}
Let $k\geq2$ and $P$ be a polytope of dimension $k+1$.
Assume that \eqref{eq:EulerPoin} holds for any polytope of
dimension $d\in\{k-1,k\}$.
Then \eqref{eq:EulerPoin} holds for $P$ (with $d=k+1$).
\end{lemma}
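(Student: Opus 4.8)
The plan is to combine a single generic ``sweep'' of $P$ by parallel hyperplanes with the induction hypothesis applied to the vertex figures of $P$. I would fix a linear functional $\ell$ on $\mathbb R^{k+1}$ that is injective on the (finite) vertex set of $P$, and enumerate the vertices as $v_0,v_1,\dots,v_N$ in increasing order of $\ell$-value, so that $N+1=f^0$. For a nonempty face $F$ of $P$, write $\mathrm{top}(F)$, resp.\ $\mathrm{bot}(F)$, for the vertex of $F$ on which $\ell$ attains its maximum, resp.\ its minimum, over $F$ (well defined since $\ell$ separates the vertices of $F$). Set
\[
a_i=\sum_{F:\ \mathrm{top}(F)=v_i}(-1)^{\dim F},\qquad b_i=\sum_{F:\ \mathrm{bot}(F)=v_i}(-1)^{\dim F},
\]
the sums ranging over all nonempty faces $F$ of $P$. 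Since each nonempty face has a unique top and a unique bottom vertex, the left-hand side of \eqref{eq:EulerPoin} for $P$ equals both $\sum_i a_i$ and $\sum_i b_i$. The only face with $\mathrm{top}(F)=v_0$ is $\{v_0\}$, so $a_0=1$, and symmetrically $b_N=1$; hence it suffices to show that $a_i=0$ for every internal index $0<i<N$, together with $a_N=0$.

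\emph{Sweep step.} For $t\in\mathbb R$ let $P_t=P\cap\{\ell=t\}$. When $\ell(v_0)<t<\ell(v_N)$ the slice $P_t$ is a nonempty polytope of dimension exactly $k$, so by the induction hypothesis in dimension $k$ its alternating face count is $1$, while $P_t=\emptyset$ for $t$ outside $[\ell(v_0),\ell(v_N)]$. The faces of $P_t$ are precisely the sets $F\cap\{\ell=t\}$ with $F$ a face of $P$ satisfying $\mathrm{bot}(F)$ strictly below and $\mathrm{top}(F)$ strictly above level $t$, and such a face of $P_t$ has dimension $\dim F-1$. As $t$ passes a critical value $\ell(v_i)$, only faces through $v_i$ change status: those with $\mathrm{bot}(F)=v_i$ appear, those with $\mathrm{top}(F)=v_i$ disappear, and faces in which $v_i$ is a non-extreme vertex persist. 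Computing the resulting jump of the alternating face count of $P_t$ yields exactly $a_i-b_i$; since that count is the constant $1$ on the whole open interval $(\ell(v_0),\ell(v_N))$, we get $a_i=b_i$ for all $0<i<N$.

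\emph{Vertex-figure step.} Fix $0<i<N$. Faces of $P$ containing $v_i$ of dimension $\ge1$ correspond bijectively to nonempty faces of the vertex figure $Q:=P/v_i$, a polytope of dimension $k$, with the dimension dropping by one. The edges of $P$ at $v_i$ split into ``decreasing'' and ``increasing'' ones according to $\ell$, and both types occur because $v_i$ is neither the $\ell$-minimum nor the $\ell$-maximum of $P$; so I can choose a hyperplane $h$ in the affine span of $Q$ that avoids all vertices of $Q$ and separates the decreasing-type vertices of $Q$ from the increasing-type ones. Then $h$ meets the interior of $Q$, so $Q\cap h$ is a polytope of dimension $k-1$. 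Under the above correspondence, the faces $F\ni v_i$ with $\mathrm{top}(F)=v_i$ go to faces of $Q$ lying on the decreasing side of $h$, those with $\mathrm{bot}(F)=v_i$ go to faces on the increasing side, and the remaining faces through $v_i$ go to faces of $Q$ that cross $h$, which in turn correspond, via intersection with $h$, to the faces of $Q\cap h$. Feeding \eqref{eq:EulerPoin} for $Q$ (dimension $k$) and for $Q\cap h$ (dimension $k-1$) --- precisely the two instances of the induction hypothesis named in the statement --- into this bookkeeping yields $a_i+b_i=0$. The same computation at $i=N$, where the increasing side and the cross-section are empty, gives $a_N=0$ outright.

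Putting the two steps together: for $0<i<N$ we have $a_i=b_i$ and $a_i+b_i=0$, hence $a_i=0$; combined with $a_0=1$ and $a_N=0$ this gives $\sum_i a_i=1$, i.e., \eqref{eq:EulerPoin} holds for $P$ (with $d=k+1$). I expect the main obstacle to be the bookkeeping in the sweep step: one must check carefully that for a generic $\ell$ the faces of each $P_t$ are exactly the ``spanning'' faces of $P$ just described, that no slice degenerates or drops dimension as $t$ crosses a vertex, and that the faces in which $v_i$ appears as a non-extreme vertex contribute identically to the slices just below and just above level $\ell(v_i)$ and therefore cancel out of the jump, leaving precisely $a_i-b_i$.
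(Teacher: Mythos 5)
Your argument is correct, but it is a genuinely different proof from the one in the paper. The paper passes to the Schlegel diagram of $P$ on a facet and double-counts small ``flags'' placed at interior points of faces in a generic direction, so that the Euler sum decomposes \emph{facet by facet}: each projected facet $R_i$ contributes $(-1)^{k-1}$, via the induction hypothesis applied to $R_i$ and to its shadow $S_i$ under projection along $q$, and the exterior contributes $1$. You instead decompose the Euler sum \emph{vertex by vertex} using a generic linear functional: writing it as $\sum_i a_i$ (resp.\ $\sum_i b_i$) according to the top (resp.\ bottom) vertex of each face, you get $a_i=b_i$ by comparing generic slices $P_t$ on either side of a critical level, and $a_i+b_i=0$ by applying the induction hypothesis to the vertex figure $Q=P/v_i$ (dimension $k$) and to its section $Q\cap h$ (dimension $k-1$) by a hyperplane separating the descending from the ascending edge directions --- e.g.\ $h=\mathrm{aff}(Q)\cap\{\ell=\ell(v_i)\}$ works and avoids all vertices of $Q$. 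Both proofs use the hypothesis exactly in dimensions $k$ and $k-1$ and both avoid shellability; yours trades the Schlegel-diagram and projection facts for the equally standard facts that the faces of a hyperplane section $P\cap H$ (with $H$ missing all vertices) are exactly the sets $F\cap H$ over faces $F$ with vertices on both sides of $H$, with dimension dropping by one; that faces of $P/v$ correspond to faces of $P$ containing $v$; and that a vertex maximizes $\ell$ over a face if and only if every edge of that face at the vertex is $\ell$-decreasing (local--global optimality). These should be stated and justified explicitly in a full write-up, since they carry the same weight as the paper's observation (3). Two small remarks: in the jump computation the face $\{v_i\}$ belongs to both the ``appearing'' and the ``disappearing'' lists but never to any slice; its two contributions cancel, so the jump is indeed $a_i-b_i$. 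And your sweep step is actually redundant: since every face has exactly one top and one bottom vertex, $\sum_i a_i=\sum_i b_i$ equals the Euler sum for free, so the vertex-figure identities $a_0+b_0=a_N+b_N=1$ and $a_i+b_i=0$ for $0<i<N$ already yield $2\cdot(\text{Euler sum})=2$, i.e.\ \eqref{eq:EulerPoin} for $P$.
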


\begin{proof}
Recall that $f^c$, $c\in\{0,1,\dots,k+1\}$, denote
the numbers of faces of $P$ of dimension~$c$.
Our goal is to prove
\begin{equation*}
         f^0-f^1+f^2-\dots + (-1)^{k-1}f^{k-1} + (-1)^k f^k + (-1)^{k+1}f^{k+1} = 1
,\end{equation*}
or equivalently, since $f^{k+1}=1$,
\begin{equation}\label{eq:needed}
         f^0-f^1+f^2-\dots + (-1)^{k-1}f^{k-1}  = 1 + (-1)^k (1-f^k)
.\end{equation}

Let $\ca R$ denote a $k$-dimensional polyhedral complex which is the Schlegel
diagram of $P$ with respect to its facet $T$ 
(i.e., $\ca R$ results from $P$ by projecting it to $T$ from a point
outside of $P$ but sufficiently close to some interior point of~$T$;
see Figure~\ref{fig:wireframe}).
Let the ($k$-dimensional) facets of $\ca R$, 
which are the projections of the facets of $P$ other than~$T$, 
be denoted by $R_1,\dots,R_a$ where~$a:=f^k-1$, in any order, and let~$R_0:=T$.
Clearly, for each $c\in\{0,1,\dots,k-1\}$, the number of faces of dimension $c$ 
in the complex $\ca R$ is exactly~$f^c$.

The high level idea of our proof is to double-count certain objects
(signed flags) distributed to all the faces of~$\ca R$ of dimensions from
$0$ to~$k-1$.

We choose any straight line $q$ which is in {\em general position} 
with respect to~$\ca R$.
In particular, $q$ is not parallel to any face of~$\ca R$.
For every face $F$ of $\ca R$ of dimension $0\leq c\leq k-1$,
we select a point $x_F$ in the relative interior of~$F$, and we add from
$x_F$ two ``flags'' in the opposite directions of line~$q$.
Formally, these {\em flags} are the two line segments $s_1,s_2$ 
with the common end $x_F$, $s_1\cap s_2=\{x_F\}$,
which are parallel to $q$ and of length $\varepsilon>0$.
We call $x_F$ the {\em base point} and $F$ the {\em base face} 
of the flags $s_1,s_2$.
The length $\varepsilon$ is chosen to be smaller than the minimum
distance between two disjoint faces of~$\ca R$.
Each flag whose base face is of dimension $c$ gets the value
of~$\frac12(-1)^c$.

It is now clear from the definition that the total value of all flags 
distributed in~$\ca R$ equals the left-hand side of \eqref{eq:needed}.
Another easy observation is that, since $q$ has been chosen in general
direction, every flag $s$ is either contained in a unique facet of $\ca R$,
or $s$ except its base point is disjoint from the point set of $\ca R$
(in our case, actually, $set(\ca R)=R_0$).
Furthermore, no two flags sharing the same base point are contained in the
same facet of $\ca R$, by convexity.

Our next task is to sum the total value of the flags in each one facet
$R_i$ of~$\ca R$ (see~\eqref{eq:insflag}), 
and also of the flags that are outside of~$R_0$ (see~\eqref{eq:outflag}).
Consider a facet $R_i$ of~$\ca R$, $i\in\{1,\dots,a\}$
and a face $F$ of $R_i$ of dimension~$c$.
The following observation follows readily from the assumption of a
general position of $q$ and from convexity:
\begin{itemize}
\refstepcounter{equation}\label{eq:project}
\item[(\theequation)]{\it
If $c=k-1$, then always one of the two flags with the base face $F$ belongs
to $R_i$.
If $0\leq c\leq k-2$, then one of the two flags with the base $F$ belongs
to $R_i$ if, and only if, the image of $F$ is not a face in the orthogonal
projection of $R_i$ along~$q$.
}\end{itemize}

We let $S_i$, $i\in\{0,1,\dots,a\}$, denote the 
$(k-1)$-dimensional image of $R_i$ in the orthogonal projection along~$q$.
Furthermore, let $f^c_i$, where $c\in\{0,1,\dots,k\}$ and $i\in\{0,1,\dots,a\}$,
denote the number of faces of $R_i$ of dimension~$c$,
and $g^c_i$, $c\in\{0,1,\dots,k-1\}$,
denote the number of faces of $S_i$ of dimension~$c$.
Now by \eqref{eq:project}, the number of flags with a base face of dimension
$c$ which are contained in $R_i$, is the following:
$f_i^c$ if $c=k-1$, and $f_i^c-g^c_i$ if $0\leq c\leq k-2$.
Hence the overall value of all the flags contained in $R_i$ is
\begin{eqnarray}
\frac12(-1)^{k-1}f_i^{k-1} &+& \frac12\sum_{c=0}^{k-2}(-1)^c(f_i^c-g^c_i)
 	~=~ \frac12\sum_{c=0}^{k-1} (-1)^c f_i^c - \frac12\sum_{c=0}^{k-2}(-1)^c g^c_i
\nonumber \\	\label{eq:insflag}
  &=& \frac12\left(1-(-1)^kf_i^k\right) - \frac12\left(1-(-1)^{k-1}g_i^{k-1}\right)
	= (-1)^{k-1}
,\end{eqnarray}
where the latter \eqref{eq:insflag} holds true by applying 
\eqref{eq:EulerPoin} to the polytopes $R_i$ and
$S_i$ of dimensions $d=k$ and $k-1$, respectively,
and by obvious~$f_i^k=g_i^{k-1}=1$.

The overall value of all the remaining flags, which are not in $R_0$,
is similarly equal to
\begin{equation}\label{eq:outflag}
\frac12\sum_{c=0}^{k-1} (-1)^c f_0^c \>+\> \frac12\sum_{c=0}^{k-2}(-1)^c g^c_0
  = \frac12\left(1-(-1)^kf_0^k\right) + \frac12\left(1-(-1)^{k-1}g_0^{k-1}\right)
   = 1
.\end{equation}

Since we have been counting twice the same set of flags,
we get that (the left-hand side of) \eqref{eq:needed}
must equal the sum of \eqref{eq:insflag} over $i=1,\dots,a$, and of
\eqref{eq:outflag}, leading to
\begin{equation*}
f^0-f^1+f^2-\dots + (-1)^{k-1}f^{k-1} = (-1)^{k-1}a+1 
  = (-1)^{k}(1-f^k)+1
,\end{equation*}
and thus finishing the proof of \eqref{eq:needed} for $P$ in dimension~$k+1$.
\end{proof}

\section{Another Point of View}
\label{sec:proofII}

Interestingly, there is another and quite different exposition of the proof
ideas from Section~\ref{sec:proofI}, which deals directly with the polytope
$P$ instead of its Schlegel diagram, and which uses only very elementary
and apparent combinatorial and geometric arguments.
This exposition has also been published (in full)
in the proceedings~\cite{10.1007/978-3-030-83823-2_15}.

\begin{proof}[\bfseries Alternative proof of Lemma~\ref{lem:EulerPoinI}]
Recall that $P$ is a polytope in dimension $k+1$, and
$f^c$, $c\in\{0,1,\dots,k+1\}$, denote the numbers of faces of $P$ of dimension~$c$.
Again, we assume validity of \eqref{eq:EulerPoin} in dimensions $d\in\{k-1,k\}$,
and our goal is to prove
\begin{equation}\label{eq:neededII}
	 f^0-f^1+f^2-\dots + (-1)^k f^k + (-1)^{k+1} f^{k+1} = 1 
.\end{equation}

We choose arbitrary two facets $T_1,T_2$ of $P$ (distinct, but not
necessarily disjoint) and two points $t_1\in T_1$ and $t_2\in T_2$ in their
relative interior, such that the straight line $q$ determined by $t_1,t_2$
is in a general position with respect to~$P$.
In particular, we demand that {\em no} nontrivial line segment lying
in a face of $P$ of dimension $\leq k-1$ is coplanar with~$q$.
We also denote by $T_3,\dots,T_{f^k}$ the remaining facets of $P$, in any order.
As in Section~\ref{sec:proofI},
for every face $F$ of $P$ of dimension $0\leq c\leq k-1$,
we select a point $x_F$ in the relative interior of~$F$
(note that $x_v=v$ if $v$ is a~vertex~of~$P$).

\smallskip
In the proof we use a {\em discharging} argument, an advanced variant of
the double-counting method in combinatorics.
To every face $F$ of $P$ of dimension $0\leq c\leq k+1$,
we assign {\em charge} of value~$(-1)^c$ (note that charge applies also to
whole~$F=P$).
Hence the {\em total change} initially assigned to all faces of $P$ equals the
left-hand side of \eqref{eq:neededII}.

\begin{figure}[t]
$$
\begin{tikzpicture}[xscale=1.3]
\tikzstyle{every node}=[draw, shape=circle, inner sep=1.2pt, fill=white]
\tikzstyle{every path}=[draw=black]
\draw (-0.5,1.5) node[fill=black,label=left:{$t_i$}] (t) {} ;
\tikzstyle{every node}=[draw=blue, shape=circle, inner sep=1.2pt, fill=white]
\fill[fill=lightgray]
        (0.5,0) node[label=below:{$a$}] (a) {} --
	  (2.5,0) node[label=below:{$b$}] (b) {} -- 
	(4,1) node[label=right:{$c$}] (c) {} --
	  (3,3) node[label=above:{$d$}] (d) {} -- 
	(1,3) node[label=above:{$e$}] (e) {} -- (a) ;
\tikzstyle{every node}=[draw=red, shape=circle, inner sep=1.2pt, fill=white]
\draw (1.5,0) node[label=below:{$x_A$}] (A) {} ;
\draw (3,0.3) node[label=right:{~~$x_B$}] (B) {} ;
\draw (3.5,2) node[label=right:{~$x_C$}] (C) {} ;
\draw (2,3) node[label=above:{$x_D$}] (D) {} ;
\draw (0.56,0.4) node[label=left:{$x_E$}] (E) {} ;
\tikzstyle{every path}=[dashed,draw=brown]
\draw (a) -- (t) -- (b) ;
\draw (c) -- (t) -- (d) ;
\draw (t) -- (e) -- (1.3,3.3) ;
\draw (A) -- (t) -- (B) ;
\draw (C) -- (t) -- (D) ;
\draw (t) -- (E) -- (0.8,0.15) ;
\tikzstyle{every path}=[draw]
\draw[->, draw=blue] (b) -- (2.3,0.7) node[draw=none,fill=none,
		 label=left:{$+\frac12\!\!\!$}] {}  ;
\draw[->, draw=blue] (c) -- (3.3,1.2) ;
\draw[->, draw=blue] (d) -- (2.7,2.4) ;
\draw[->, draw=red] (D) -- (2,2.3) node[draw=none,fill=none,
		 label=left:{$-\frac12\!\!\!$}] {}  ;
\draw[->, draw=red] (A) -- (1.6,0.6) ;
\draw[->, draw=red] (E) -- (1.1,0.8) ;
\draw[->, draw=red] (C) -- (3.1,1.8) ;
\draw[->, draw=red] (B) -- (2.75,0.8) ;
\tikzstyle{every node}=[draw=none]
\draw (2,1.5) node {$T_i$} ;
\end{tikzpicture}
\qquad\qquad
\begin{tikzpicture}[xscale=1.3]
\tikzstyle{every node}=[draw, shape=circle, inner sep=1.2pt, fill=white]
\tikzstyle{every path}=[draw=black]
\tikzstyle{every node}=[draw=blue, shape=circle, inner sep=1.2pt, fill=white]
\fill[fill=lightgray]
        (0.5,0) node[label=below:{$a$}] (a) {} --
	  (2.5,0) node[label=below:{$b$}] (b) {} -- 
	(4,1) node[label=right:{$c$}] (c) {} --
	  (3,3) node[label=above:{$d$}] (d) {} -- 
	(1,3) node[label=above:{$e$}] (e) {} -- (a) ;
\draw (2,1.5) node[fill=black,label=left:{$t_2$}] (t) {} ;
\tikzstyle{every node}=[draw=red, shape=circle, inner sep=1.2pt, fill=white]
\draw (1.5,0) node[label=below:{$x_A$}] (A) {} ;
\draw (3,0.3) node[label=right:{~~$x_B$}] (B) {} ;
\draw (3.5,2) node[label=right:{~$x_C$}] (C) {} ;
\draw (2,3) node[label=above:{$x_D$}] (D) {} ;
\draw (0.75,1.5) node[label=left:{$x_E$}] (E) {} ;
\tikzstyle{every path}=[dashed,draw=brown]
\draw (a) -- (t) -- (b) ;
\draw (c) -- (t) -- (d) ;
\draw (t) -- (e) ;
\draw (A) -- (t) -- (B) ;
\draw (C) -- (t) -- (D) ;
\draw (t) -- (E) ;
\tikzstyle{every path}=[draw]
\draw[->, draw=blue] (b) -- (2.2,0.7) node[draw=none,fill=none,
		 label=right:{$\!\!\!\!+\frac12$}] {}  ;
\draw[->, draw=blue] (a) -- (1,0.4) ;
\draw[->, draw=blue] (c) -- (3.4,1.1) ;
\draw[->, draw=blue] (d) -- (2.7,2.4) ;
\draw[->, draw=blue] (e) -- (1.5,2.4) ;
\draw[->, draw=red] (D) -- (1.95,2.3) node[draw=none,fill=none,
		 label=right:{$\!\!\!\!-\frac12$}] {}  ;
\draw[->, draw=red] (A) -- (1.6,0.6) ;
\draw[->, draw=red] (E) -- (1.3,1.4) ;
\draw[->, draw=red] (C) -- (3.1,1.8) ;
\draw[->, draw=red] (B) -- (2.75,0.8) ;
\tikzstyle{every node}=[draw=none]
\draw (3,1.5) node {$T_2$} ;
\end{tikzpicture}
$$
\caption{Proof of Lemma~\ref{lem:EulerPoinI}:
    a facet in a $3$-dimensional polytope~$P$ ($k=2$).
    Each vertex and facet of $P$ initially get charge of~$1$ and each edge~$-1$.
    Consider, e.g., a facet $T_i$ of $P$ which is a pentagon with vertices
    $a,b,c,d,e$ and sides (edges) $A,B,C,D,E$ in order.
    Let $t_i$ be the point in which the plane of $T_i$ intersects the line~$q$
    (see in the proof).
    On the left of the picture ($t_i\not\in T_i$, for $i\geq3$), 
    we have that the vertices $b,c,d$ send
    charge of $\frac12$ to $T_i$ by the rule \eqref{eq:discharge},
    while $a,e$ are not sending to~$T_i$.
    On the right ($t_i\in T_i$, $i=1,2$), all the vertices $a,b,c,d,e$ send charge of
    $\frac12$ to $T_i$.
    In both cases, every side $A,B,C,D,E$ sends charge of $-\frac12$ to $T_i$.
    Consequently, on the left $T_i$ ends up with charge $0$
    (compare to \eqref{eq:insflagIII}), while on the
    right with charge~$1$ (cf.\eqref{eq:twospecIII}).
    }
\label{fig:discharge}
\end{figure}

Now we {\em discharge} all the assigned charge from faces
of dimension $c\leq k-1$ to the facets of $P$.
The discharging rule is only one and quite simple.
Consider a facet $T_i$ of $P$, $1\leq i\leq f^k$.
Let $t_i\in q$ denote the unique point which is the intersection of the line
$q$ with the support hyperplane of~$T_i$.
This is a sound definition of $t_i$ according to a general position of~$q$,
and it is consistent with the choice of $t_1,t_2$ above.
For any proper face $F$ of $T_i$ (so $F$ is a face of $P$ as
well and is of dimension $0\leq c\leq k-1$), 
the discharging rule reads (see in Figure~\ref{fig:discharge}):
\begin{equation}\label{eq:discharge}
\vtop{\advance\hsize-6ex\it\noindent
The face $F$ sends half of its initial charge, i.e.~$\frac12(-1)^c$,
to the facet $T_i$ if, and only if,
the straight line passing through $x_F$ and~$t_i$ intersects the relative
interior of~$T_i$. 
}\end{equation}
Note that we will be finished if we prove that, after applying the discharging rule,
(i)~every face of $P$ of dimension $\leq k-1$ ends up with charge $0$,
and (ii) the total charge of the facets of $P$ and of $P$ itself
sums up to the right-hand side of \eqref{eq:neededII}.

\begin{figure}[t]
$$
\begin{tikzpicture}[scale=0.8]
\tikzstyle{every path}=[dashed,draw=brown]
\draw (3.5,3) node (x) {} -- (-2,1) node (ti1) {} ;
\draw (x) -- (7,1) node (ti2) {} ;
\tikzstyle{every path}=[draw=black]
\tikzstyle{every node}=[draw=none]
\fill[fill=lightgray]
        (2,0) node (a) {} -- (4.5,0) -- (5.45,1.89) --
	(3.5,3) node (x) {} -- (2,2.46) -- (1,1.5) -- (2,0) ;
\draw (2.5,3) node {$A_1$} ;
\draw (4.3,3) node {$A_2$} ;
\draw (3.5,1.5) node {$N$} ;
\draw (-3,1) node[label=below:{$q$}] {} -- (8,1) ;
\tikzstyle{every node}=[draw, shape=circle, inner sep=1.2pt, fill=white]
\draw (x) node[label=above:{$x_F$}] {} ;
\draw (1.33,1) node[label=below:{$t_1$}] {} ;
\draw (5,1) node[label=below:{~$t_2$}] {} ;
\draw (ti1) node[fill=black, label=below:{$t_{i_1}$}] {} ;
\draw (ti2) node[fill=black, label=below:{$t_{i_2}$}] {} ;
\end{tikzpicture}\qquad\qquad\qquad\qquad
$$ $$\qquad\qquad\qquad\qquad
\begin{tikzpicture}[scale=0.8]
\tikzstyle{every path}=[dashed,draw=brown]
\draw (2,2.46) node (x) {} -- (-2,1) node (ti1) {} ;
\draw (x) -- (0.5,1) node (ti2) {} ;
\tikzstyle{every path}=[draw=black]
\tikzstyle{every node}=[draw=none]
\fill[fill=lightgray]
        (2,0) node (a) {} -- (4.5,0) -- (5.45,1.89) --
	(3.5,3) -- (2,2.46) node (x) {} -- (1,1.5) -- (2,0) ;
\draw (2.9,3.2) node {$A_1$} ;
\draw (0.9,2) node {$A_2$} ;
\draw (3.5,1.5) node {$N$} ;
\draw (-4,1) node[label=below:{$q$}] {} -- (6,1) ;
\tikzstyle{every node}=[draw, shape=circle, inner sep=1.2pt, fill=white]
\draw (x) node[label=above:{$x_F$}] {} ;
\draw (1.33,1) node[label=below:{$t_1$}] {} ;
\draw (5,1) node[label=below:{~$t_2$}] {} ;
\draw (ti1) node[fill=black, label=below:{$t_{i_1}$}] {} ;
\draw (ti2) node[fill=black, label=below:{$t_{i_2}$}] {} ;
\end{tikzpicture}
$$
\caption{Proof of Lemma~\ref{lem:EulerPoinI}:
    Two cases of a polygon $N$ which is the intersection of the polytope~$P$ with
    the plane spanning $q$ and a point $x_F$ of a face~$F$. 
    The two sides $A_1,A_2$ of $N$ incident to~$x_F$ determine the two
    unique facets $T_{i_1},T_{i_2}$ of $P$ that $F$ sends charge to.
    }
\label{fig:planecut}
\end{figure}

\smallskip
For the task (i), consider any face $F$ of $P$ of dimension $c\leq k-1$
and the point $x_F$ chosen in $F$ above.
Let $L$ denote the plane determined by the line $q=\overline{t_1t_2}$
and the point $x_F\not\in q$.
Then $N:=P\cap L$ is a convex polygon.
See Figure~\ref{fig:planecut}.
We claim that $x_F$ must be a vertex of~$N$:
indeed, if $x_F$ belonged to a relative interior of a side $A_0$ of~$N$,
then $A_0\subseteq F$ and $A_0$ would be coplanar with~$q$, contradicting
our assumption of a general position of~$q$.
Consequently, $x_F$ is incident to two sides $A_1,A_2$ of~$N$, and
there exist facets $T_{i_1},T_{i_2}$ of $P$,
$1\leq i_1\not=i_2\leq f^k$, such that $A_j=T_{i_j}\cap L$ for $j=1,2$.
Observe that the support line of $A_j$ intersects $q$ precisely in 
$t_{i_j}$ (which has been defined as the intersection of the 
support hyperplane of $T_{i_j}$ with~$q$).

Moreover, since $A_j$ is coplanar with $q$, by our assumption of a general
position of~$q$ it cannot happen that $A_j$ is contained in a face of
dimension $\leq k-1$.
Consequently, $A_j$ (except its ends) belongs to the relative interior 
of $T_{i_j}$, and $T_{i_j}$ is a unique such face for~$A_j$.
Hence, taking this argument for $j=1,2$, we see that $F$ sends away
by \eqref{eq:discharge} exactly 
two halves of its initial charge, ending up with charge~$0$.

\smallskip
For the task (ii), 
let $f^c_i$, where $c\in\{0,1,\dots,k\}$ and $i\in\{1,\dots,f^k\}$,
denote the number of faces of $T_i$ of dimension~$c$.
We first look at the two special facets $T_i$, $i=1,2$
(Figure~\ref{fig:discharge} right).
Since $t_i\in T_i$ in this case, by \eqref{eq:discharge} $T_i$ receives
(half) charge from every of its proper faces.
Finally, $T_i$ starts with charge $(-1)^k$ which we halve in the computation.
Using \eqref{eq:EulerPoin} for $T_i$, which is of dimension $k$ and so $f_i^k=1$,
we thus get that the total charge $T_i$ ends up with, is
\begin{equation}\label{eq:twospecIII}
  \frac 12 \left( f_i^0-f_i^1+\dots + (-1)^{k-1}f_i^{k-1} + (-1)^{k}f_i^{k}
	\right) + \frac 12 (-1)^{k} = \frac 12  + \frac 12 (-1)^{k}
.\end{equation}

Second, consider a facet $T_i$ where $i\geq3$.
Let $H_i$ be the support hyperplane of~$T_i$.
Then $\{t_i\}=H_i\cap q$ and $t_i\not\in T_i$.
We restrict ourselves to the affine space formed by $H_i$, and denote 
by $S_i$ a projection of $T_i$ from the point $t_i$ onto a suitable
hyperplane within~$H_i$.
Since $t_i$ is in a general position with respect to $T_i$ (which is implied
by a general position of $q$), the following holds:
every proper face of $S_i$ is the image of an equivalent face of $T_i$
(of the same dimension).
Furthermore, by convexity, a face $F$ of $T_i$ has no image among the faces
of $S_i$ if, and only if, the line through $x_F$ and~$t_i$ intersects the
relative interior of~$T_i$.
See also Figure~\ref{fig:discharge} left.

Consequently, as directed by \eqref{eq:discharge},
$T_i$ receives charge precisely from those of its faces $F$
which do not have an image among the proper faces of~$S_i$.
Denote by $g^c_i$ the number of faces of $S_i$ of dimension~$c\leq k-1$,
and notice that $f_i^k=g_i^{k-1}=1$.
Hence, precisely, $T_i$ receives $\frac12(-1)^{k-1}$ of charge from
each of its $f_i^{k-1}$ faces of dimension $k-1$, and $\frac12(-1)^{c}$
from $f_i^c-g^c_i$ of its faces of dimension $0\leq c\leq k-2$.
Recall also that $T_i$ itself starts with charge $(-1)^k$.
Summing together, and using \eqref{eq:EulerPoin} 
for $T_i$ (of dimension $k$) and for $S_i$ (of dimension~$k-1$), we get
\begin{eqnarray}
\frac12(-1)^{k-1}f_i^{k-1} &+& \frac12\sum_{c=0}^{k-2}(-1)^c(f_i^c-g^c_i)
	~+~ \frac12(-1)^k+\frac12(-1)^k
\nonumber \\
 	&=& \frac12\sum_{c=0}^{k-1} (-1)^c f_i^c - \frac12\sum_{c=0}^{k-2}(-1)^c g^c_i
	~+~ \frac12(-1)^k-\frac12(-1)^{k-1}
\nonumber \\	\label{eq:insflagIII}
 	&=& \frac12\sum_{c=0}^{k} (-1)^c f_i^c - \frac12\sum_{c=0}^{k-1}(-1)^c g^c_i
	~=~ \frac12-\frac12 = 0
.\end{eqnarray}

Since the total charge is not changed,
we get that the left-hand side of~\eqref{eq:neededII}
must equal the sum of the charge of $P$, of \eqref{eq:twospecIII} over $i=1,2$ and of
\eqref{eq:insflagIII} over remaining facets, leading~to
$$
(-1)^{k+1}+ \frac12+\frac12(-1)^{k} + \frac12+\frac12(-1)^{k} +0 = 1
,$$
and thus finishing the proof of \eqref{eq:neededII} for $P$.
\end{proof}

%

\bibliographystyle{plain}
\bibliography{Euler}

\begin{thebibliography}{1}

\bibitem{Bruggesser-Mani}
Heinz Bruggesser and Peter Mani.
\newblock Shellable decompositions of cells and spheres.
\newblock {\em Math. Scand.}, 29:197--205, 1971.

\bibitem{junkyardEuler}
David Eppstein.
\newblock The geometry junkyard: {T}wenty proofs of {E}uler's formula.
\newblock \verb|https://www.ics.uci.edu/~eppstein/junkyard/euler/|, 2016.

\bibitem{Grunbaum}
Branko Gr{\"u}nbaum.
\newblock {\em Convex polytopes}.
\newblock Graduate texts in mathematics. Springer, New York, Berlin, London,
  2003.

\bibitem{10.1007/978-3-030-83823-2_15}
Petr Hlin{\v{e}}n{\'y}.
\newblock A short proof of {E}uler--{P}oincar{\'e} formula.
\newblock In Jaroslav Ne{\v{s}}et{\v{r}}il, Guillem Perarnau, Juanjo Ru{\'e},
  and Oriol Serra, editors, {\em Extended Abstracts EuroComb 2021}, pages
  92--96, Cham, 2021. Springer International Publishing.

\bibitem{Schlafli}
Ludwig Schl\"afli.
\newblock {\em Theorie der vielfachen Kontinuit\"at [1852]}.
\newblock (in German). First publication 1901 Graf, J. H., ed.~ Republished by
  Cornell University Library historical math monographs, Z\"urich, Basel: Georg
  \& Co., 2010.

\bibitem{DBLP:conf/birthday/Welzl94}
Emo Welzl.
\newblock Gram's equation - {A} probabilistic proof.
\newblock In {\em Results and Trends in Theoretical Computer Science}, volume
  812 of {\em Lecture Notes in Computer Science}, pages 422--424. Springer,
  1994.

\bibitem{Ziegler}
G{\"u}nter~M. Ziegler.
\newblock {\em Lectures on polytopes}.
\newblock Graduate texts in mathematics. Springer, New York, 1995.

\end{thebibliography}

\end{document}